\documentclass[reqno, 11pt]{amsart}
\usepackage[colorlinks]{hyperref}
\usepackage{url}
\usepackage{times}

\newtheorem{theorem}{Theorem}[section]

\newtheorem{proposition}[theorem]{Proposition}
\newtheorem{corollary}[theorem]{Corollary}

\theoremstyle{definition}

\newtheorem{remark}[theorem]{Remark}

%\theoremstyle{remark}
%\newtheorem{remark}[theorem]{Remark}

%\numberwithin{section}{part}
\numberwithin{equation}{section}

\usepackage{bbm}
\usepackage{url}
\usepackage{euscript}
\usepackage{pb-diagram}
\usepackage{lamsarrow}
\usepackage{pb-lams}
\usepackage{amsmath}
\usepackage{amsthm}

\usepackage{epsfig}
\usepackage{amssymb}
\usepackage{pifont}
\usepackage{amsbsy}

\usepackage{graphicx}
\usepackage{epstopdf}

\oddsidemargin 30pt
\evensidemargin .0in
\marginparsep 0pt
\topmargin 0.2pt
\marginparwidth 0pt
\baselineskip 14pt
\textwidth 6.1in
\textheight 8.6in

\newskip\aline \newskip\halfaline
\aline=12pt plus 1pt minus 1pt \halfaline=6pt plus 1pt minus 1pt
\def\skipaline{\vskip\aline}

\def\qedbox{$\rlap{$\sqcap$}\sqcup$}
\def\qed{\nobreak\hfill\penalty250 \hbox{}\nobreak\hfill\qedbox\skipaline}

\def\proofend{\eqno{\mbox{\qedbox}}}

% Blackboard bold

\newcommand\bC{{\mathbb C}}

\newcommand{\bN}{{{\mathbb N}}}

\newcommand\bR{{\mathbb R}}

%\renewcommand{\Box}{\blacksquare}

%Math operators

%\renewcommand{\uu}{\underline{u}}

%\DeclareMathOperator{\tr}{tr}

%\DeclareMathOperator{\Cr}{Cr}

%Bold symbols

%Bold Latin

\newcommand{\bsR}{\boldsymbol{R}}

\newcommand{\bsT}{{\boldsymbol{T}}}
\newcommand{\bsU}{\boldsymbol{U}}

%Bold Greek

\newcommand{\bsi}{\boldsymbol{\sigma}}

%Greek letters

%Caligraphic

\newcommand{\eO}{\EuScript{O}}

\newcommand{\eQ}{\EuScript{Q}}

%Special symbols

\newcommand{\ra}{\rightarrow}

\newcommand{\Llra}{{\Longleftrightarrow}}

\def\inpr{\mathbin{\hbox to 6pt{\vrule height0.4pt width5pt depth0pt \kern-.4pt \vrule height6pt width0.4pt depth0pt\hss}}}

%Derivate partiale reale

%\renewcommand{\theequation}{\thesection.\arabic{equation}}

%\DeclareMathAlphabet{\mathpzc}{OT1}{pzc}{m}{it}

%\DeclareMathOperator{\ev}{\mathbf{ev}}
%\DeclareMathOperator{\dist}{dist}

\DeclareMathOperator{\su}{\boldsymbol{S}}
\DeclareMathOperator{\Seq}{\mathbf{Seq}}
\DeclareMathOperator {\Gen}{\boldsymbol{G}}

\begin{document}

\title{Regularization of certain divergent series of polynomials}
%\date{Started October 24, 2007. Completed November 2, 2007. 2nd Revision November 13, 2007.}

\author{Liviu I. Nicolaescu}
\thanks{Last modified on {\today}.}

\address{Department of Mathematics, University of Notre Dame, Notre Dame, IN 46556-4618.}
\email{nicolaescu.1@nd.edu}
\urladdr{\url{http://www.nd.edu/~lnicolae/}}
\subjclass[2000]{05A10, 05A19, 11B83, 40A30, 40G05, 40G10}
\keywords{translation invariant operators, divergent series, summability.}

\begin{abstract}  We  investigate the  generalized convergence  and sums of series of the form $\sum_{n\geq 0} a_n\bsT^nP(x)$, where $P\in\bR[x]$, $a_n\in\bR$, $\forall n\geq 0$, and $\bsT:\bR[x]\ra \bR[x]$  is a linear operator that commutes with the differentiation $\frac{d}{dx}:\bR[x]\ra \bR[x]$.\end{abstract}

\maketitle

\tableofcontents

\section{The main result}
\setcounter{equation}{0}

We  consider   series of the form
\[
\sum_{n\geq 0} a_n\bsT^n P(x),
\tag{$\dag$}
\label{tag: dag}
\]
where $P\in\bR[x]$, and $\bsT:\bR[x]\ra \bR[x]$ is a  linear  operator  such that 
\[
\bsT D=D\bsT,
\tag{$\ast$}
\label{tag: inv}
\]
where $D$ is the differentiation operator  $D=\frac{d}{dx}$.  The condition (\ref{tag: inv}) is equivalent with the translation invariance of $\bsT$, i.e.,
\[
\bsT\bsU^h=\bsU^h\bsT,\;\;\forall h\in \bR,
\tag{I}
\label{tag: inv1}
\]
where $\bsU^h:\bR[x]\ra \bR[x]$ is the translation operator
\[
\bR[x]\ni p(x)\mapsto p(x+h)\in\bR[x].
\]
For simplicity we set $\bsU:=\bsU^1$. Clearly $\bsU^h\in \eO$  so  a special case of the  series (\ref{tag: dag}) is the series
\[
\sum_{n\geq 0}a_n\bsU^{nh}P(x)=\sum_{n\geq 0} a_n P(x+nh),\;\;h\in\bR,
\tag{$\ddag_h$}
\label{tag: ddag}
\]
which is typically  divergent.  

We denote by $\eO$ the $\bR$-algebra of translation invariant operators. We have a natural map
\[
\eQ: \bR[[t]]\ra \eO,\;\; \bR[[t]\ni \sum_{n\geq 0}c_n\frac{t^n}{n!}\mapsto \sum_{n\geq 0} \frac{c_n}{n!}D^n.
\]
It is known  (see \cite[Prop. 3.47]{A}) that this map   is an isomorphism of rings.   We  denote by $\bsi$ the inverse  of $\eQ$
\[
\bsi:\eO\ra \bR[[t]],\;\;\eO\ni\bsT\mapsto \bsi_\bsT\in\bR[[t]].
\]
For $\bsT\in \eO$ we will refer to the formal power series $\bsi_\bsT$ as the \emph{symbol} of the operator $\bsT$. More explicitely
\[
\bsi_\bsT(t)=\sum_{n\geq 0} \frac{c_n(\bsT)}{n!}t^n,\;\;c_n(\bsT)= (\bsT x^n)|_{x=0}\in\bR.
\]
We denote by $\bN$ the set of nonnegative integers,  and by $\Seq$ the  vector space of   real sequences, i.e., maps $a:\bN\ra \bR$. Let $\Seq^c$ the vector subspace of $\Seq$ consisting of all  convergent sequences.

 A  \emph{generalized  notion of convergence}\footnote{Hardy refers to  such a notion of convergence as convergence in some `Pickwickian' sense.} or \emph{regularization method} is a pair $\mu=({}^\mu\lim, \Seq_\mu)$, where

\begin{itemize}

\item $\Seq_\mu$ is a vector subspace of $\Seq$  containing $\Seq^*$ and, 

\item ${}^\mu\lim$ is a linear map 
\[
{}^\mu\lim:\Seq_\mu\ra \bR,\;\; \Seq_\mu \ni a\mapsto {}^\mu\lim_n a(n)\in\bR
\]
 such that for any $a\in \Seq^*$ we have
\[
{}^\mu\lim a=\lim_{n\ra \infty}a(n).
\]
\end{itemize}
The sequences in $\Seq_\mu$ are called \emph{$\mu$-convergent} and ${}^\mu\lim$ is called the  \emph{$\mu$-limit}.     To any sequence $a\in\Seq$ we associate the sequence $\su[a]$ of partial sums
\begin{equation}
\su[a](n)=\sigma_{k=0}^n a(k).
\label{eq: su}
\end{equation}
A series $\sum_{n\geq 0}a(n)$ is said to by $\mu$-convergent if the sequence $\su[a]$ is $\mu$-convergent. We set
\[
{}^\mu\sum_{n\geq 0}a(n):={}^\mu\lim_n\su[a](n).
\]
We say that ${}^\mu\sum_{n\geq 0} a(n)$ is the $\mu$-sum of the series.    The regularization method is said to be \emph{shift} invariant if it satisfies the condition 
\begin{equation}
{}^\mu\sum_{n\geq 0} a(n)= a(0)+{}^\mu\sum_{n\geq 1} a(n).
\label{eq: c}
\end{equation}
We refer to the classic \cite{H} for a large collection of regularization methods.

For $x\in \bR$ and $k\in \bN$ we set
\[
[x]_k:=\begin{cases}
\prod_{i=0}^{k-1}(x-i), & k\geq 1\\
1, &k=0,
\end{cases}, \binom{x}{k}:=\frac{[x]_k}{k!}.
\]
 We can now state the main result of this paper.

\begin{theorem}  Let $\mu$ be a regularization method, $\bsT\in \eO$ and $f(t)=\sum_{n\geq 0} a_nt^n\in\bR[[t]]$.  Set $c:=c_0(\bsT)=\bsT 1$.  Suppose that  $f$ is \emph{$\mu$-regular at $t=c$}, i.e.,
\[
\mbox{for every $k\in \bN$  the series  $\sum_{n\geq 0}a_n[n]_k\, c^{n-k}$ is $\mu$-convergent.}
\tag{$\mu$}
\label{tag: mu}
\]
We denote by $f^{(k)}(c)_\mu$ its $\mu$-sum
 \[
 f^{(k)}(c)_\mu={}^\mu\sum_{n\geq 0} a_n[n]_k\,c^{n-k}.
 \]
 Then for every  $P\in\bR[x]$ the series $\sum_{n\geq 0}a_n(\bsT^n P)(x)$ is $\mu$-convergent and its $\mu$-sum is
 \[
 {}^\mu\sum a_n(\bsT^n P)(x)=f(\bsT)_\mu P\,(x),
 \]
 where $f(\bsT)_\mu\in\eO$ is the operator
 \begin{equation}
 f(\bsT)_\mu:=\sum_{n\geq 0} \frac{f^{(k)}(c)_\mu}{k!}(\bsT-c)^k.
 \label{eq: regf}
 \end{equation}
 \label{th: main}
  \end{theorem}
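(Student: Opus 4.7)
My plan is to collapse the infinite series into a finite algebraic expression using the nilpotency of $\bsT - c$ on polynomials, after which the statement will follow from the linearity of ${}^\mu\lim$.

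First I would observe that $c_0(\bsT - c) = ((\bsT - c)\cdot 1)|_{x=0} = 0$, so the symbol $\bsi_{\bsT - c}(t)$ has zero constant term. Under the ring isomorphism $\eQ$, this means $\bsT - c = D\bsS$ for some $\bsS \in \eO$, so $\bsT - c$ strictly lowers the degree of every polynomial. Consequently, for $P \in \bR[x]$ of degree $d$,
$$(\bsT - c)^k P = 0 \quad \text{whenever } k > d.$$
In particular, the operator $f(\bsT)_\mu$ defined in (\ref{eq: regf}), although formally an infinite sum in $k$, reduces on $P$ to the finite linear combination $\sum_{k=0}^d \frac{f^{(k)}(c)_\mu}{k!}(\bsT - c)^k P$, so $f(\bsT)_\mu P$ is well-defined.

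Next, since $c\cdot \mathrm{Id}$ and $\bsT - c$ commute, the binomial theorem gives
$$\bsT^n P = \sum_{k=0}^{n}\binom{n}{k} c^{n-k}(\bsT - c)^k P = \sum_{k=0}^{d}\frac{[n]_k}{k!}c^{n-k}(\bsT - c)^k P,$$
with the convention $[n]_k = 0$ for $n < k$, so negative exponents of $c$ carry vanishing weight. Evaluating at any $x \in \bR$, summing from $n=0$ to $N$, and interchanging the two finite sums yields
$$\sum_{n=0}^{N} a_n(\bsT^n P)(x) = \sum_{k=0}^{d}\frac{(\bsT - c)^k P(x)}{k!}\cdot \sum_{n=0}^{N} a_n [n]_k c^{n-k}.$$

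Finally I would invoke the hypothesis (\ref{tag: mu}): for each $k = 0, \dots, d$, the inner partial-sum sequence $N \mapsto \sum_{n=0}^N a_n[n]_k c^{n-k}$ is $\mu$-convergent with $\mu$-limit $f^{(k)}(c)_\mu$. Because the outer sum has only $d+1$ terms and ${}^\mu\lim$ is linear, the left-hand side is $\mu$-convergent with $\mu$-sum
$$\sum_{k=0}^{d}\frac{f^{(k)}(c)_\mu}{k!}(\bsT - c)^k P(x) = f(\bsT)_\mu P(x),$$
as required. The only non-formal ingredient is the nilpotency observation $(\bsT - c)^{d+1}P = 0$; after that, the argument is a finite bookkeeping exchange exploiting $\mu$-linearity on $d+1$ indices, so I do not anticipate a serious obstacle.
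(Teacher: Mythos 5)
Your proposal is correct and follows essentially the same route as the paper's own proof: establish the nilpotency $(\bsT-c)^{k}P=0$ for $k>\deg P$, expand $\bsT^n=(c+(\bsT-c))^n$ by the binomial theorem, exchange the two (now finite) sums, and conclude by linearity of ${}^\mu\lim$. Your justification of the nilpotency via the vanishing constant term of the symbol $\bsi_{\bsT-c}$ is the same observation the paper makes by writing $\bsT-c=\sum_{n\geq 1}\frac{c_n(\bsT)}{n!}D^n$.
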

  
  \begin{proof} Set $\bsR:=\bsT-c$ and let $P\in\bR[x]$. Then
  \[
  \bsR=\sum_{n\geq 1}\frac{c_n(\bsT)}{n!}D^n
  \]
  so that
  \begin{equation}
\bsR^nP=0,\;\;\forall  n>\deg P.
\label{eq: vanish}
\end{equation}
In particular this shows that $f(\bsT)_\mu$ is well defined. We have
\[
a_n\bsT^n P=a_n(c+\bsR)^nP=a_n\sum_{k=0}^n \binom{n}{k}c^{n-k} \bsR^{k} P=\sum_{k=0}^{\deg P}\binom{n}{k}c^{n-k} \bsR^{k} P.
\]
At the last step we used (\ref{eq: vanish}) and the fact that 
\[
\binom{n}{k}=0,\;\;\mbox{if}\;\; k>n.
\]
This shows that  the formal series $\sum_{n\geq 0}a_n(\bsT^n P)(x)$ can be written as a \emph{finite} linear combination of formal series
\[
\sum_{n\geq 0}a_n(\bsT^n P)(x)= \sum_{k=0}^{\deg P}\frac{\bsR^k P(x)}{k!}\left(\,\sum_{n\geq 0} a_n[n]_k\,c^{n-k}\,\right).
\]
 From the linearity of the $\mu$-summation operator we deduce 
 \[
 {}^\mu\sum_{n\geq 0}a_n(\bsT^n P)(x)= \sum_{k=0}^{\deg P}\frac{\bsR^k P(x)}{k!}\left(\,{}^\mu\sum_{n\geq 0} a_n[n]_k\,c^{n-k}\,\right)
 \]
 \[
 =\left(\sum_{k=0}^{\deg P}\frac{f^{(k)}(c)_\mu}{k!}\bsR^k\right)\,P(x)=f(\bsT)_\mu P\,(x)
  \]
  \end{proof}

\section{Some applications}
\setcounter{equation}{0}
To   describe some consequences of Theorem \ref{th: main} we need to first describe some classical facts about  regularization methods. 

For any sequence $a\in\Seq$ we denote by $\Gen_a(t)\in\bR[[t]]$ its generating series. We regard the partial sum construction $\su$ in (\ref{eq: su}) as a linear operator  $\su:\Seq\ra \Seq$. Observe that
\[
\Gen_{\su[a]}(t)=\frac{1}{1-t}\Gen_a(t).
\label{eq: gen-su}
\]
 We say that a  regularization method $\mu_1=({}^{\mu_1}\lim,\Seq_{\mu_1})$ is stronger  than  the regularization method $\mu_0=({}^{\mu_1}\lim,\Seq_{\mu_0})$, and we write this $\mu_0\prec\mu_1$, if 
\[
\Seq_{\mu_0}\subset \Seq_{\mu_1}\;\;\mbox{and}\;\;{}^{\mu_1}\lim_n a(n)={}^{\mu_0}\lim_n a(n),\;\;\forall a\in \Seq_{\mu_0}.
\]
The \emph{Abel regularization method}\footnote{This  was apparently known and used by Euler.} $A$    is defined as follows.  We say  that a sequence $a$ is  $A$ convergent if  

\begin{itemize}

\item the radius of convergence  of the series $\sum_{n\geq 0} a_n t^n$ is at least $1$ and

\item  the  function  $t\mapsto (1-t)\sum_{n\geq 0}a_nt^n$ has a finite limit as $t\ra 1^-$. 

\end{itemize}

Hence
\[
{}^A\lim a(n)=\lim_{t\ra 1^-} (1-t)\sum_{n\geq 0} a_nt^n,
\]
and $\Seq_A$ consists of sequence for which the above limit exists  and it is finite.   Using (\ref{eq: gen-su}) we deduce that a series $\sum_{n\geq 0}a(n)$ is $A$-convergent if and only if the limit
\[
\lim_{t\ra 1^-}\sum_{n\geq 0}a_nt^n
\]
exists and it is finite.

Let $k\in\bN$. A sequence $a\in\Seq$ is said to be \emph{$C_k$-convergent} (or \emph{Ces\`{a}ro convergent of order $k$})  if the limit
\[
\lim_{n\ra \infty} \frac{\su^k[a](n)}{\binom{n+k}{k}}
\]
exists and it is finite. We denote this limit by ${}^{C_k}\lim a(n)$. A  series  $\sum_{n\ geq 0} a(n)$ is said to be $C_k$-convergent  if the sequence of partial sums $\su[a]$ is $C_k$ convergent.  Thus the $C_k$-sum of this series is
\[
{}^{C_k}\sum_{n\geq 0} a(n)=\lim_{n\ra \infty}\frac{\su^{k+1}[a](n)}{\binom{n+k}{k}}.
\]
More explicitly, we have  (see \cite[Eq.(5.4.5)]{H})
\[
{}^{C_k}\sum_{n\geq 0} a(n)=\lim_{n\ra \infty}\frac{1}{\binom{n+k}{k}}\Biggl(\,\sum_{\nu=0}^n\binom{\nu+k}{k}a(n-\nu)\,\Biggr)
\]
Hence
\[
{}^{C_k}\sum_{n\geq 0} a(n)\Llra \su^{k+1}[a](n)\sim A\binom{n+k}{k}\sim A\frac{n^k}{k!},
\]
where
\[
a\sim b \Llra \lim_{n\ra\infty}\frac{a(n)}{b(n)}=1,
\]
if $a(n), b(n)\neq 0$, for $n\gg 0$.

The $C_0$ convergence is equivalent  with the classical convergence and it is known  (see \cite[Thm. 43, 55]{H})  that
\[
C_k\prec C_{k'}\prec A,\;\;\forall  k<k'.
\]
Given this fact, we define a sequence to be  $C$-convergent (Ces\`{a}ro convergent) if it is $C_k$-convergent for some  $k\in\bN$. Note that $C\prec A$.  Both the $C$ and $A$ methods are shift invariant, i.e., they satisfy the condition (\ref{eq: c}).

\begin{proposition}  The power series
\[
f(t)=(1+t)^{-1}=\sum_{n\geq 0}(-t)^n
\]
is $C$-regular at $t=1$, and
\[
f^{(k)}(1)_C=\frac{(-1)^k k!}{2^{k+1}},\;\; f(r)_C=\sum_{k\geq 0} \frac{f^{(k)}(1)_C}{k!} r^k=f(1+r)= (2+r)^{-1}.
\]
\label{prop: C-reg}
\end{proposition}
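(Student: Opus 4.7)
The plan is to verify the $C$-regularity condition at $c = c_0(\bsT) = 1$ directly: with $a_n = (-1)^n$, I must show that for every $k \in \bN$ the series
\[
\sum_{n\geq 0}(-1)^n[n]_k \;=\; k!\sum_{n\geq 0}(-1)^n\binom{n}{k}
\]
is $C$-convergent with sum $\frac{(-1)^k k!}{2^{k+1}}$. The strategy is to obtain a closed form for the partial sums $S_N^{(k)} := \sum_{n=0}^N(-1)^n\binom{n}{k}$ that isolates a constant part from an oscillating polynomial part, and then read off $C_{k+1}$-convergence of the sequence $(S_N^{(k)})_N$ by a direct Cesàro computation.

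The closed form I would establish by induction on $k$ is
\[
S_N^{(k)} \;=\; \frac{(-1)^k}{2^{k+1}} + (-1)^N Q_k(N),
\]
where $Q_k \in \bR[N]$ has degree $k$. The base case $k=0$ is the classical identity $\sum_{n=0}^N(-1)^n = \tfrac{1}{2}+\tfrac{1}{2}(-1)^N$. The inductive step uses Pascal's identity $\binom{n}{k}=\binom{n-1}{k}+\binom{n-1}{k-1}$, which after a shift of index yields the recursion $S_N^{(k)} + S_{N-1}^{(k)} = -S_{N-1}^{(k-1)}$; substituting the inductive hypothesis reduces the construction of $Q_k$ to solving the discrete antidifference equation $Q_k(N)-Q_k(N-1)=Q_{k-1}(N-1)$ with a single initial condition (namely $S_k^{(k)}=(-1)^k$), which has a unique polynomial solution of degree $k$.

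With the closed form in hand, $C_{k+1}$-convergence is essentially automatic. Iterated summation $\su$ sends a constant $L$ to $L\binom{m+p}{p}$ and raises the degree of any polynomial by exactly $1$, while $\sum_{n=0}^{m}(-1)^n p(n)$ preserves the shape ``constant plus $(-1)^m$ times a polynomial of the same degree as $p$''. Applying $\su^{k+1}$ to $S_N^{(k)}$ therefore produces
\[
\su^{k+1}[S^{(k)}](m) \;=\; \frac{(-1)^k}{2^{k+1}}\binom{m+k+1}{k+1} + P(m) + (-1)^m \widetilde{Q}(m),
\]
with $\deg P \leq k$ and $\deg \widetilde{Q}\leq k$. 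Dividing by $\binom{m+k+1}{k+1}\sim m^{k+1}/(k+1)!$ kills the last two terms and isolates the $C_{k+1}$-limit $(-1)^k/2^{k+1}$, so that $f^{(k)}(1)_C = (-1)^k k!/2^{k+1}$ as claimed. Feeding this back into formula (\ref{eq: regf}) and summing a geometric series yields
\[
f(r)_C \;=\; \sum_{k\geq 0}\frac{(-1)^k r^k}{2^{k+1}} \;=\; \frac{1}{2+r}.
\]
The main obstacle is the polynomial bookkeeping in the induction defining $Q_k$ and in tracking how degrees evolve under iterated $\su$; once that is pinned down, the Cesàro step is automatic, since the constant term in $S_N^{(k)}$ is the unique source of a contribution of exact order $m^{k+1}$ in $\su^{k+1}[S^{(k)}](m)$.
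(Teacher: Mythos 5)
Your proof is correct, and it reaches the key estimate by a genuinely different route than the paper. The paper stays on the generating-function side: with $\hat{\beta}_{k-1}(n)=(-1)^n\binom{n}{k-1}$ it computes $\Gen_{\hat{\beta}_{k-1}}(t)=(-t)^{k-1}(1+t)^{-k}$, hence $\Gen_{\su^{k+1}[\hat{\beta}_{k-1}]}(t)=(-1)^{k-1}t^{k-1}(1+t)(1-t^2)^{-(k+1)}$, from which it reads off the iterated partial sums \emph{exactly} as $(-1)^{k-1}\binom{\mu(n)+k}{k}$ and concludes from the asymptotics $\binom{\mu(n)+k}{k}\sim \frac{1}{2^k}\cdot\frac{n^k}{k!}$. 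You instead work directly with the sequences: the Pascal recursion $S_N^{(k)}+S_{N-1}^{(k)}=-S_{N-1}^{(k-1)}$ yields the closed form $S_N^{(k)}=\frac{(-1)^k}{2^{k+1}}+(-1)^NQ_k(N)$ --- note that the induction pins down the constant through the relation $2c_k=-c_{k-1}$ (which is where $\frac{(-1)^k}{2^{k+1}}$ actually comes from) in addition to determining $Q_k$ via the antidifference equation and the value $S_k^{(k)}=(-1)^k$ --- and you then propagate the shape ``polynomial plus $(-1)^m$ times a polynomial of degree $\leq k$'' through the remaining $k+1$ applications of $\su$ before comparing with $\binom{m+k+1}{k+1}$. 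Both arguments end with the same asymptotic comparison and the same order $C_{k+1}$ of summability. The paper's computation is shorter because the generating function delivers all the iterated sums in a single stroke; your version is more elementary (no generating functions at all) and makes the source of the limit transparent, since the constant term of $S_N^{(k)}$ is the only part that can grow to order $m^{k+1}$ under $\su^{k+1}$, at the cost of the degree bookkeeping you acknowledge. The final step, $f(r)_C=\sum_{k\geq 0}\frac{(-1)^k}{2^{k+1}}r^k=(2+r)^{-1}$, is identical in both treatments.
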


\begin{proof} It suffices  show that for any $k\geq 1$ the series 
\[
\sum_{n\geq 0}(-1)^n\binom{n}{k-1}=\frac{1}{(k-1)!}\sum_{n\geq 0}(-1)^n[n]_{k-1}
\]
 is $C_k$ convergent to $\frac{(-1)^{k-1}}{2^k}$.  For $k\geq 1$ we denote  by $\hat{\beta}_{k-1}$ the sequence
\[
\hat{\beta}_{k-1}(n)=(-1)^n\binom{n}{k-1},\;\;\forall n\geq 0.
\]
 Then
\[
\Gen_{\hat{\beta}_{k-1}}=\sum_{n\geq 0} (-1)^n\binom{n}{k-1} t^n
\]
\[
=\sum_{n\geq k-1}(-1)^n\binom{n}{k-1}t^n=(-1)^{k-1}t^{k-1}\sum_{\nu\geq 0}(-1)^{\nu}\binom{\nu+k-1}{k-1}t^\nu=(-t)^{k -1}(1-t)^{-k}.
\]
Then
\[
\Gen_{\su^{k+1}[\hat{\beta}_{k-1}]}(t)=\frac{(-t)^{k-1}}{(1+t)^k(1-t)^{k+1}}=(-1)^{k-1}\frac{t^{k-1}(1+t)}{(1-t^2)^{k+1}}
\]
\[
=(-1)^{k-1}(t^k+t^{k-1})\sum_{m\geq 0}\binom{m+k}{k}t^{2m}=(-1)^{k-1}\sum_{m\geq 0}\binom{m+k}{k}(t^{2m+k}+t^{2m+k-1}).
\]
Define
\[
\mu(n)= \begin{cases}
\frac{n-k}{2}, & n\equiv k\bmod 2\\
\frac{n-k+1}{2}, &n\equiv k+1\bmod 2.
\end{cases}
\]
Then
\[
\Gen_{\su^{k+1}[\hat{\beta}_{k-1}]}(t)=(-1)^{k-1}\sum_{n\geq k-1}\binom{\mu(n)+k}{k} t^n.
\]
The desired conclusion follows by observing that
\[
\binom{\mu(n)+k}{k}\sim \frac{1}{2^k}\cdot\frac{n^k}{k!}.
\]
\end{proof}

\begin{corollary} The  series $f(t)=\log(1+t)=\sum_{n\geq 1}(-1)^{n+1}\frac{t^n}{n}$ is $C$-regular at $t=1$ and
\[
f(r)_C=\sum_{k\geq 0} \frac{f^{(k)}(1)_C}{k!} r^k=f(1+r)=\log(2+r).
\]
\end{corollary}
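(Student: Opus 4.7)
The plan is to piggyback directly on Proposition~\ref{prop: C-reg}. Write $f(t)=\log(1+t)=\sum_{n\geq 1}a_n t^n$ with $a_0=0$ and $a_n=(-1)^{n+1}/n$ for $n\geq 1$. With $c=1$, the $C$-regularity condition requires that for every $k\in\bN$ the series $\sum_{n\geq 0}a_n[n]_k$ be $C$-convergent. The case $k=0$ is just the classical alternating series $\sum_{n\geq 1}(-1)^{n+1}/n=\log 2$, so $f^{(0)}(1)_C=\log 2=f(1)$.

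For $k\geq 1$ the $n=0$ term vanishes (both $a_0$ and $[0]_k$ are zero), and the elementary identity $[n]_k/n=(n-1)(n-2)\cdots(n-k+1)=[n-1]_{k-1}$, valid for all $n\geq 1$, allows the re-indexing
\[
\sum_{n\geq 0}a_n[n]_k=\sum_{n\geq 1}(-1)^{n+1}[n-1]_{k-1}=\sum_{m\geq 0}(-1)^m[m]_{k-1}.
\]
But this last series, up to the factor $(k-1)!$, is exactly the one whose $C_k$-convergence to $\frac{(-1)^{k-1}}{2^k}$ is computed inside the proof of Proposition~\ref{prop: C-reg}. Hence $f^{(k)}(1)_C=\frac{(-1)^{k-1}(k-1)!}{2^k}$, which coincides with the ordinary derivative $f^{(k)}(1)=(-1)^{k-1}(k-1)!(1+t)^{-k}\big|_{t=1}$ of $\log(1+t)$.

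Since $f^{(k)}(1)_C=f^{(k)}(1)$ for every $k\geq 0$, the regularized series reproduces the genuine Taylor expansion of $\log(1+t)$ around $t=1$:
\[
f(r)_C=\sum_{k\geq 0}\frac{f^{(k)}(1)}{k!}r^k=\log 2+\sum_{k\geq 1}\frac{(-1)^{k-1}}{k}\left(\frac{r}{2}\right)^k=\log 2+\log(1+r/2)=\log(2+r),
\]
as a formal power series in $r$ (and as an analytic identity for $|r|<2$). The only substantive step is the index shift $[n]_k/n=[n-1]_{k-1}$, which reduces the $C$-regularity check entirely to the case already handled by Proposition~\ref{prop: C-reg}; there is no serious obstacle beyond keeping track of the $n=0$ term and the shift of the summation index.
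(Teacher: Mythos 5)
Your proof is correct and follows essentially the same route as the paper: the $k=0$ case is the classically convergent alternating harmonic series, and for $k\geq 1$ the check is reduced to Proposition \ref{prop: C-reg}; your explicit index shift $[n]_k/n=[n-1]_{k-1}$ is precisely the combinatorial content of the paper's terser remark that $D_tf=(1+t)^{-1}$. The only difference is that you spell out the reduction (and the resulting closed form $f^{(k)}(1)_C=\frac{(-1)^{k-1}(k-1)!}{2^k}$) rather than leaving it implicit.
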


\begin{proof} Clearly  the alternating series
\[
f(1)=\sum_{n\geq 1}(-1)^{n+1}\frac{1}{n}
\]
is convergent, thus $C$-convergent.   The $C$-convergence of the series
\[
\sum_{n\geq 1}(-1)^{n+1}\frac{[n]_k}{n}
\]
now follows from the previous proposition since $D_tf=(1+t)^{-1}$. 
\end{proof}

We have the following immediate result.

\begin{proposition} Suppose  that  $f(z)$ is a holomorphic function  defined in an open neighborhood of the set  $\{1\}\cup \{|z|\}\subset \bC$. If $\sum_{n\geq 0}a_nz^n$ is the Taylor series expansion of $f$ at $z=0$ then the corresponding formal power series $[f]=\sum_{n\geq 0}a_nt^n$ is $A$-regular at $t=1$,
\[
[f]^{(k)}(1)_A= f^{k}(1),
\]
and the series
\[
 [f](r)_A=\sum_{[f]^{(k)}(1)_A}{k!}r^k
 \]
coincides with   so the  Taylor expansion of $f$ at $z=1$,   it converges to $f(1+r)$.
\label{prop: A-reg}
\end{proposition}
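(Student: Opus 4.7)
The approach is to reduce everything to the identity
\[
\sum_{n\geq 0} a_n [n]_k\, t^n \;=\; t^k f^{(k)}(t),
\]
obtained by term-by-term differentiation of the Taylor series, and then to invoke continuity of $f^{(k)}$ at $t=1$. Since $f$ is holomorphic on an open neighborhood of the closed unit disk, the radius of convergence of $\sum_{n\geq 0}a_n z^n$ is some $R>1$, so within $|z|<R$ one may differentiate $k$ times term by term to obtain $f^{(k)}(z)=\sum_{n\geq 0} a_n [n]_k\, z^{n-k}$ (the terms with $n<k$ vanish because $[n]_k=0$ there). Multiplying by $z^k$ gives the identity above on $|z|<R$, and in particular on the real interval $[0,1)$.

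To verify the $A$-regularity condition (\ref{tag: mu}) at $c=1$, fix $k\in\bN$. By the characterization of $A$-convergence recorded earlier in the excerpt, it suffices to check that $\sum_{n\geq 0} a_n [n]_k\, t^n$ admits a finite limit as $t\to 1^-$. By the identity, this limit equals
\[
\lim_{t\to 1^-} t^k f^{(k)}(t) \;=\; f^{(k)}(1),
\]
where the last equality uses that $f^{(k)}$ is holomorphic, hence continuous, at $z=1$. Thus $[f]$ is $A$-regular at $t=1$ and $[f]^{(k)}(1)_A = f^{(k)}(1)$ for every $k\in\bN$.

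Substituting this into the definition of $[f](r)_A$ yields $\sum_{k\geq 0}\frac{f^{(k)}(1)}{k!}r^k$, which is precisely the Taylor series of $f$ centered at $z=1$; since $f$ is holomorphic in a neighborhood of $1$, this series converges to $f(1+r)$ for $|r|$ less than the distance from $1$ to the nearest singularity of $f$. There is no genuine obstacle in the argument: the only place requiring a word of care is the term-by-term differentiation of the power series, which is standard within the radius of convergence and is what produces the key identity.
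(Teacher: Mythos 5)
The paper offers no proof of this proposition (it is labeled an ``immediate result''), and your argument is the natural one that the author presumably has in mind: term-by-term differentiation gives $\sum_{n\geq 0}a_n[n]_k t^n=t^kf^{(k)}(t)$, and the $A$-sum is the radial limit of this generating function at $t=1$, which exists by continuity of $f^{(k)}$ near $1$. The one point you should fix is your reading of the hypothesis. The set $\{1\}\cup\{|z|\}$ is garbled in the statement, but it is meant to be $\{1\}\cup\{|z|<1\}$ (the open unit disk together with the point $1$), not a neighborhood of the closed disk: the paper's own illustrating example $f(z)=e^{1/(1+z)}$ in the remark following Proposition \ref{prop: CA} has an essential singularity at $z=-1$ on the unit circle, so it is not holomorphic on any neighborhood of the closed disk, yet is asserted to satisfy the hypothesis of Proposition \ref{prop: A-reg}. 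Under your reading the radius of convergence would be $R>1$, every series in sight would converge classically, and the proposition would be vacuous; under the intended reading one only gets $R\geq 1$, and your claim ``$R>1$'' is false. Fortunately nothing in your argument actually needs $R>1$: term-by-term differentiation is valid on $|z|<1$, the generating function $t^kf^{(k)}(t)$ of the sequence $n\mapsto a_n[n]_k$ has radius of convergence at least $1$ as required by the definition of $A$-convergence, and the limit as $t\to 1^-$ equals $f^{(k)}(1)$ because the segment $[1-\eps,1]$ lies in the domain of holomorphy. So the proof stands once the incorrect sentence about $R>1$ is replaced by these weaker observations.
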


\begin{corollary} Suppose  that  $f(z)$ is a holomorphic function  defined in an open neighborhood of the set  $\{1\}\cup \{|z|\}\subset \bC$ and $\sum_{n\geq 0}a_nz^n$ is the Taylor series expansion of $f$ at $z=0$.  Then for every $\bsT$ in $\eO$ such that $c_0(\bsT)=1$, any $P\in\bR[x]$, and any $x\in bR$ we have
\[
{}^A\sum_n a_n\bsT^n P(x)=\sum_{k\geq 0} \frac{f^{k}(1)}{k!}(\bsT-1)^kP(x).\proofend
\]
\label{cor: reg-A}
\end{corollary}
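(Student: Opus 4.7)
The strategy is to combine the two preceding results directly: Proposition \ref{prop: A-reg} verifies the $\mu$-regularity hypothesis of Theorem \ref{th: main} for $\mu=A$, and then Theorem \ref{th: main} produces the desired identity.

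More concretely, I would first note that by hypothesis $c=c_0(\bsT)=\bsT 1=1$. The Taylor series $\sum_{n\geq 0}a_n t^n$ of $f$ at $z=0$ has radius of convergence at least $1$, and $f$ is holomorphic in a neighborhood of $z=1$, so Proposition \ref{prop: A-reg} applies: the formal series $[f]=\sum_{n\geq 0}a_n t^n$ is $A$-regular at $t=1$ in the sense of condition (\ref{tag: mu}), and its $A$-regularized derivatives satisfy
\[
[f]^{(k)}(1)_A=f^{(k)}(1),\qquad k\in\bN.
\]

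Next, I would invoke Theorem \ref{th: main} with $\mu=A$, the operator $\bsT$ as given, and $P\in\bR[x]$. The theorem asserts that $\sum_{n\geq 0} a_n(\bsT^n P)(x)$ is $A$-convergent with $A$-sum $f(\bsT)_A P(x)$, where
\[
f(\bsT)_A=\sum_{k\geq 0}\frac{[f]^{(k)}(1)_A}{k!}(\bsT-1)^k.
\]
Substituting $[f]^{(k)}(1)_A=f^{(k)}(1)$ from the previous step yields the claimed formula. The sum is automatically finite: as in the proof of Theorem \ref{th: main}, the operator $\bsR=\bsT-c=\bsT-1$ lies in $\sum_{n\geq 1}\frac{c_n(\bsT)}{n!}D^n$, so $(\bsT-1)^k P=0$ whenever $k>\deg P$.

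There is no real obstacle here; the only thing to be a little careful about is making sure the hypotheses of Proposition \ref{prop: A-reg} are read correctly so that the $A$-regularity at $t=1$ truly follows (i.e.\ that holomorphy in a neighborhood of $\{|z|\le 1\}\cup\{1\}$ gives $A$-convergence of each derivative series $\sum a_n[n]_k$), and that the equality $[f]^{(k)}(1)_A=f^{(k)}(1)$ is used in the correct place when passing from $f(\bsT)_A$ to the Taylor coefficients of $f$ at $z=1$.
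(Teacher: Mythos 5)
Your proof is correct and is exactly the argument the paper intends: the corollary is stated with a terminal qed box precisely because it is the immediate combination of Proposition \ref{prop: A-reg} (which supplies the $A$-regularity hypothesis and the identification $[f]^{(k)}(1)_A=f^{(k)}(1)$) with Theorem \ref{th: main} applied with $\mu=A$ and $c=c_0(\bsT)=1$. Nothing further is needed.
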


Recall that the Cauchy product of two sequences $a, b\in \Seq$ is the sequence $a\ast b$,
\[
a\ast b(n)=\sum_{i=0}^n a(n-i)b(i),\;\;\forall n\in \bN.
\]
A regularization method is said to be \emph{multiplicative} if
\[
{}^\mu\sum_n  a\ast b(n) =\left(\,  {}^\mu\sum_n  a(n)\, \right) \left(\, {}^\mu\sum_n b(n)\,\right),
\]
for any $\mu$-convergent series $\sum_{n\geq 0}a(n)$ and $\sum_{n\geq 0}b(n)$. The results  of \cite[Chap.X]{H} show that the $C$ and $A$ methods are multiplicative.

For any  regularization method $\mu$ and $c\in \bR$ we denote by $\bR[[t]]_\mu$ the set of series that are $\mu$-regular at $t=1$. 

\begin{proposition} Let $\mu$  be a multiplicative  regularization method.  Then $\bR[[t]]_\mu$ is a commutative ring with one and we have the product rule
\[
(f\cdot g)^{(n)}(1)_\mu=\sum_{k=0}^n\binom{n}{k} f^{(k)}(1)_\mu\cdot g^{(n-k)}(1)_\mu.
\]
Moreover, if $\bsT\in\eO$ is such that $c_0(T)=1$ then the map
\[
\bR[[t]]_\mu\ni f\mapsto f(\bsT)_\mu\in \eO
\]
is a ring morphism.
\label{prop: CA}
\end{proposition}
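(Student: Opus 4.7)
The plan is to reduce everything to the Vandermonde identity for falling factorials,
\[
[i+j]_n = \sum_{k=0}^n \binom{n}{k}[i]_k [j]_{n-k},
\]
combined with the multiplicativity of $\mu$ and elementary bookkeeping.

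First I would recall that for $f(t)=\sum a_n t^n$ and $g(t)=\sum b_n t^n$, the coefficient sequence of $fg$ is the Cauchy product $c = a\ast b$. Fix $n\in\bN$ and set
\[
\alpha^{(k)}_i := a_i [i]_k,\qquad \beta^{(l)}_j := b_j [j]_l,
\]
so that by hypothesis each series $\sum_i \alpha^{(k)}_i$ and $\sum_j \beta^{(l)}_j$ is $\mu$-convergent with $\mu$-sum $f^{(k)}(1)_\mu$ and $g^{(l)}(1)_\mu$ respectively. The crucial step is the pointwise identity
\[
c_m [m]_n = \sum_{i+j=m} a_i b_j [i+j]_n = \sum_{k=0}^n \binom{n}{k}\bigl(\alpha^{(k)}\ast \beta^{(n-k)}\bigr)(m),
\]
obtained by invoking the Vandermonde identity for $[i+j]_n$ \emph{inside} the Cauchy-product sum. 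Since this is a \emph{finite} sum over $k$, and multiplicativity of $\mu$ guarantees that each $\sum_m(\alpha^{(k)}\ast\beta^{(n-k)})(m)$ is $\mu$-convergent with $\mu$-sum $f^{(k)}(1)_\mu\, g^{(n-k)}(1)_\mu$, the linearity of ${}^\mu\sum$ would simultaneously deliver the $\mu$-convergence of $\sum_m c_m[m]_n$ and the product rule
\[
(fg)^{(n)}(1)_\mu = \sum_{k=0}^n \binom{n}{k} f^{(k)}(1)_\mu\, g^{(n-k)}(1)_\mu.
\]
Closure of $\bR[[t]]_\mu$ under sums being immediate, this settles that $\bR[[t]]_\mu$ is a commutative ring with unit and proves the product rule.

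For the final ring-morphism claim, additivity of $f\mapsto f(\bsT)_\mu$ is clear from (\ref{eq: regf}). For multiplicativity I would first check that $f(\bsT)_\mu\cdot g(\bsT)_\mu$ makes sense in $\eO$: since $c_0(\bsT-1)=0$, the symbol $\bsi_{\bsT-1}$ lies in $t\bR[[t]]$, hence $\bsi_{(\bsT-1)^n}\in t^n\bR[[t]]$, and so every coefficient of the resulting formal power series in $D$ is a finite sum. Expanding the product in $\eO$ and regrouping by total power of $\bsT-1$,
\[
f(\bsT)_\mu\cdot g(\bsT)_\mu = \sum_{n\geq 0} \frac{(\bsT-1)^n}{n!}\sum_{k=0}^n \binom{n}{k} f^{(k)}(1)_\mu\, g^{(n-k)}(1)_\mu,
\]
and the product rule established above collapses the inner sum to $(fg)^{(n)}(1)_\mu$, yielding $(fg)(\bsT)_\mu$ exactly.

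The only genuine obstacle is verifying the Vandermonde reduction of $c_m[m]_n$ to a finite linear combination of Cauchy products; once that identity is in hand, the whole argument is linearity and multiplicativity of $\mu$ applied termwise to a finite sum, with no issues of convergence interchange.
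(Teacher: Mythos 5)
Your proof is correct, but it reaches the product rule by a different road than the paper. The paper's proof is an induction: it combines the formal Leibniz rule $D_t(fg)=(D_tf)g+f(D_tg)$ with the two identities $(fg)(1)_\mu=f(1)_\mu\cdot g(1)_\mu$ and $f'(1)_\mu=(D_tf)(1)_\mu$, and iterates. You instead prove the $n$-th order identity in one shot, by expanding $c_m[m]_n$ via the Vandermonde identity $[i+j]_n=\sum_k\binom{n}{k}[i]_k[j]_{n-k}$ into a \emph{finite} linear combination of Cauchy products $\alpha^{(k)}\ast\beta^{(n-k)}$ of the original coefficient sequences, and then applying multiplicativity and linearity of ${}^\mu\sum$ term by term. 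The two computations encode the same combinatorics (iterating Leibniz is essentially a proof of Vandermonde), but your version has a concrete advantage: it uses only the multiplicativity hypothesis together with linearity of the $\mu$-sum. The paper's intermediate identity $f'(1)_\mu=(D_tf)(1)_\mu$ compares ${}^\mu\sum_{n\geq 0}na_n$ with the reindexed series ${}^\mu\sum_{m\geq 0}(m+1)a_{m+1}$, whose partial-sum sequences differ by a shift; equating their $\mu$-sums quietly invokes shift invariance, which is not among the stated hypotheses of the proposition (though it holds for the $C$ and $A$ methods actually used). Your argument never reindexes, so it avoids this extra assumption. The final ring-morphism step is the same in both: regroup $f(\bsT)_\mu\cdot g(\bsT)_\mu$ by powers of $\bsT-1$ (well defined since $c_0(\bsT-1)=0$) and collapse the inner sum with the product rule.
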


\begin{proof}  The product formula follows  from the iterated application of the equalities
\[
D_t(fg)=(D_tf)g+f(D_t g),\;\;(fg)(1)_\mu=f(1)_\mu\cdot g(1)_\mu,\;\;f'(1)_\mu=(D_tf)(1)_\mu,
\]
where $D_t:\bR[[t]]\ra \bR[[t]]$ is the formal differentiation operator $\frac{d}{dt}$. The last statement is an immediate application of the above product rule.
\end{proof}

\begin{remark} The inclusion $\bR[[t]]_C\subset \bR[[t]]_A$   is strict.  For example the power series 
\[
f(z) =e^{1/(1+z)}
\]
satisfies the assumption  of Proposition \ref{prop: A-reg} so that the associated formal power series $[f]$ is $A$-regular at $1$. On the other hand, the arguments in \cite[\S 5.12]{H} show that $[f]$ is not $C$-regular at $1$.  \qed
\end{remark}

Consider the translation operator $\bsU^h\in \eO$.  From Taylor's formula
\[
p(x+h)=\sum_{n\geq 0} \frac{h^n}{n!}D^n p(x)
\]
we deduce  that
\[
\bsi_{\bsU^h}(t)=e^{th}.
\]
Set $\Delta_h:=\bsU^h-1$.  Using Proposition \ref{prop: C-reg} and Theorem \ref{th: main} we deduce the following result.

\begin{corollary} For any $P\in\bR[x]$ we have  
\begin{equation}
{}^C\sum_{n\geq 0} (-1)^nP(x+nh)=\frac{1}{2}\left(\,\sum_{n\geq 0}\frac{(-1)^n}{2^n}\Delta_h^n\,\right)P(x).
\label{eq: alt-trans0}
\end{equation}
\label{cor: alt-trans}
\end{corollary}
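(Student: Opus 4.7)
The plan is to apply Theorem \ref{th: main} directly with $\bsT := \bsU^h$ and the coefficients $a_n := (-1)^n$, so that the associated formal power series is
\[
f(t) = \sum_{n\geq 0}(-t)^n = (1+t)^{-1}.
\]
First I would compute $c := c_0(\bsU^h)$. By definition $c_0(\bsT) = (\bsT x^0)|_{x=0} = \bsT 1$, and clearly $\bsU^h 1 = 1$, so $c = 1$. Consequently $\bsR := \bsT - c = \bsU^h - 1 = \Delta_h$.

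Next I would verify the hypothesis of Theorem \ref{th: main}, namely that $f$ is $\mu$-regular at $t = c = 1$ for $\mu = C$. But this is precisely the content of Proposition \ref{prop: C-reg}, which moreover records the values
\[
f^{(k)}(1)_C = \frac{(-1)^k k!}{2^{k+1}}.
\]

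Now I would plug these ingredients into formula \eqref{eq: regf}. Since $\bsU^{nh} P(x) = P(x+nh)$, Theorem \ref{th: main} gives
\[
{}^C\sum_{n\geq 0}(-1)^n P(x+nh) \;=\; f(\bsU^h)_C \, P(x) \;=\; \sum_{k\geq 0}\frac{f^{(k)}(1)_C}{k!}\,\Delta_h^k P(x) \;=\; \sum_{k\geq 0}\frac{(-1)^k}{2^{k+1}}\,\Delta_h^k P(x),
\]
which after factoring $\tfrac{1}{2}$ is exactly \eqref{eq: alt-trans0}. Note that the sum over $k$ is actually finite, since $\Delta_h^k P = 0$ for $k > \deg P$, as already observed in \eqref{eq: vanish}; this justifies that $f(\bsU^h)_C$ is a well-defined element of $\eO$.

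There is no real obstacle here: the corollary is essentially a specialization of Theorem \ref{th: main} fed by the explicit $C$-regularization data computed in Proposition \ref{prop: C-reg}. The only small point worth checking is the identification $c_0(\bsU^h) = 1$, after which the two previous results plug together directly.
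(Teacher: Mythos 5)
Your proposal is correct and follows exactly the route the paper intends: the corollary is stated as an immediate consequence of Theorem \ref{th: main} applied to $\bsT=\bsU^h$ (so $c_0(\bsU^h)=1$ and $\bsT-c=\Delta_h$) together with the $C$-regularity data $f^{(k)}(1)_C=\frac{(-1)^kk!}{2^{k+1}}$ from Proposition \ref{prop: C-reg}. The paper gives no further details, so your write-up simply makes explicit the same specialization.
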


Observe that
\[
\left(\,1+\frac{1}{2}\Delta_h\,\right)\left(\,\sum_{n\geq 0}\frac{(-1)^n}{2^n}\Delta_h^n\,\right)=1
\]
so that $\frac{1}{2}\sum_{n\geq 0}\frac{(-1)^n}{2^n}\Delta_h^n$ is the inverse of the operator $2+\Delta_h$. We thus have
\begin{equation}
{}^C\sum_{n\geq 0}(-1)^nP(x+nh)=(2+\Delta_h)^{-1} P(x)=(1+\bsU^h)^{-1}P(x).
\label{eq: alt-trans}
\end{equation}
The  inverse  of $1+\bsU^h$ can be explicitly expressed using  Euler numbers and polynomials, \cite[Eq. (14), p.134]{No}. The Euler numbers $E_k$ are defined by the Taylor expansion
\[
\frac{1}{\cosh t}=\frac{2}{e^t+e^{-t}}=\sum_{k\geq 0}\frac{E_k}{k!}t^k.
\]
Since $\cosh t$ is an even function we deduce that $E_k=0$ for odd $k$.  Here are the first few Euler numbers.
\[
\begin{tabular}{||c|c|c|c|c|c|c|c|c|c||}\hline
$n $  & $0$            & $2 $           & $4 $          & $6 $          & $8$             & $10$            & $12$               & $14$ & $16$  \\ \hline
      &     &                &                &               &               &                 &                 &                    &          \\
$E_n$ & $1$ & $-1$ & $5$ & $-61$ & $1,385$ & $-50,521$  & $2,702,765$ & $-199,360,981$ &  $19,391,512,145$ \\ \hline
\end{tabular}
\]

Then
\[
\frac{1}{1+\bsU^h}=\frac{ \bsU^{-\frac{h}{2} } }{ \bsU^{ \frac{h}{2} } +\bsU^{ -\frac{h}{2} } } =\frac{ \bsU^{-\frac{1}{2}} }{ e^{ \frac{D}{2} }+e^{-\frac{D}{2} } }=\frac{1}{2}\bsU^{-\frac{h}{2} }\frac{1}{ \cosh \frac{hD}{2} }=\frac{1}{2}\bsU^{-\frac{h}{2}}\sum_{k\geq 0} \frac{E_kh^k}{2^kk!}D^k.
\]
Hence
\begin{equation}
{}^C\sum_{n\geq 0} (-1)^nP(x+nh)= \frac{1}{2}\sum_{k\geq 0} \frac{E_kh^k}{2^kk!}P^{(k)}\left(\,x-\frac{h}{2}\,\right).
\label{eq: nor}
\end{equation}
When $P(x)=x^m$, $h=1$, and $x=0$ we deduce
\begin{equation}
{}^C\sum_{n\geq 0}(-1)^n n^m= \frac{1}{2^{m+1}}\sum_{k\geq 0} (-1)^{m-k}E_k\binom{m}{k}.
\label{eq: nor1}
\end{equation}
When $P(x)=\binom{x}{m}$, $x=0$, $h=1$ then it is more convenient to use  (\ref{eq: alt-trans0}) because 
\[
\Delta \binom{x}{k}=\binom{x}{k-1},\;\;\forall k,x.
\]
We deduce
\begin{equation}
{}^C\sum_{n\geq 0}(-1)^n\binom{n}{m}=\frac{1}{2}\sum_{k=1}^m\frac{(-1)^k}{2^k}\binom{0}{m-k}=\frac{(-1)^m}{2^{m+1}}.
\label{eq: nor2}
\end{equation}

\begin{remark}  Here is a more direct  (and almost complete) proof of the equality    (\ref{eq: alt-trans}) assuming the  Ces\`{a}ro convergence of  the series $\sum_{n\geq 0}(-1)^nP(x+nh)$. Denote by $S(x)$ the Ces\`{a}ro sum of this series. Then
\[
S(x+h)={}^C\sum_{n\geq 0} (-1)^n P\bigl(\, x+(n+1)h\,\bigr) \]
\[
\stackrel{ (\ref{eq: c}) }{=} \,-\,{}^C\sum_{n\geq 0} (-1)^n P(x+h) +P(x)= -S(x)+P(x).
\]
Hence
\[
S(x+h)+S(x)=P(x),\;\;\forall x\in \bR.
\]
If we knew that $S(x)$ is a polynomial we would then deduce 
\[
S(x)=(1+\bsU^h)^{-1}P(x).\proofend
\]
\end{remark}

\begin{remark} We want to comment a bit about possible methods of establishing $C$-convergence. To formulate a general strategy we need to introduce a classical notation. More precisely, if $f(t)=\sum_{n\geq 0}a_nt^n$ is a formal power series  we     let $[t^n]f(t)$ denote the coefficient of $t^n$ in this power series, i.e. $[t^n]f(t)=a_n$.

  Let $f(t)=\sum_{n\geq 0} a_nt^n$.   Then  the series   $\sum_{n\geq 0}a_nt^n$  $C$-converges to $A$ if and only if there exists   a  nonnegative  \emph{real} number $\alpha$ such that
  \[
  [t^n]\left( (1-t)^{-(\alpha+1)} f(t)\,\right)\sim A \frac{n^\alpha}{\Gamma(\alpha+1)}
  \]
  where $\Gamma$ is Euler's  Gamma function. For a proof we refer  to \cite[Thm. 43]{H}.  This characterization leads to the following

  \medskip
  
 \noindent \textbf{Ces\`{a}ro  summability  meta-principle.}  {\it Suppose   that the power series  $f(t)=\sum_{n\geq 0} a_nt^n$ defines a holomorphic function $f(z)$  such that  the following  hold.
  
  \begin{itemize} 
  
  \item The domain of $f(z)$ contains  the open disk $\{|z|<1\}$.
  
  \item  Along the  unit  circle $\{|z|=1\}$ the function $f(z)$  has only   finitely many singular points
  \[
  \zeta_1,\dotsc, \zeta_\nu\neq 1.
  \]
  \item  For every singular  point $\zeta_k$ there exists a positive integer $m_k$ such that 
  \[
  \lim_{z\ra \zeta_k}|z-\zeta_k|^{m_k} f(z)=0.
  \]
  \end{itemize} 
  
  Then the series $\sum_{n\geq 0} a_n$ is Ces\`{a}ro summable and the Ces\`{a}ro sum  equals the   Abel sum, $\lim_{t\nearrow 1} f(t)$. }
  
  \medskip

 For a more detailed description of the conditions when this meta-principle   is a genuine   principle we refer to \cite[Thm. VI.5]{FS}.  \qed
 
 \end{remark}

\end{document}